\renewcommand{\leq}{\leqslant}
\renewcommand{\geq}{\geqslant}
\def\stacksum#1#2{{\stackrel{{\scriptstyle #1}}
{{\scriptstyle #2}}}}
\newcommand{\Cc}{\mathbf{C}}
\newcommand{\Zz}{\mathbf{Z}}
\newcommand{\Rr}{\mathbf{R}}
\newcommand{\Qq}{\mathbf{Q}}
\newcommand{\proba}{\text{\boldmath$P$}}
\newcommand{\expect}{\text{\boldmath$E$}}
\newcommand{\charfun}{\mathbf{1}}
\newcommand{\mods}[1]{\,(\mathrm{mod}\,{#1})}
\newcommand{\ra}{\rightarrow}
\newcommand{\lra}{\longrightarrow}
\newcommand{\fleche}[1]{\stackrel{#1}{\lra}}
\newcommand{\eps}{\varepsilon}
\newcommand{\friable}{\Psi}
\DeclareMathSymbol{\gena}{\mathord}{letters}{"3C}
\DeclareMathSymbol{\genb}{\mathord}{letters}{"3E}
\def\sumb{\mathop{\sum \Bigl.^{\flat}}\limits}
\def\multsum{\mathop{\sum\cdots\sum}\limits}
\def\sums{\mathop{\sum \Bigl.^{*}}\limits}
\theoremstyle{plain}
\newtheorem{theorem}{Theorem}
\newtheorem{proposition}[theorem]{Proposition}
\theoremstyle{remark}
\newtheorem{remark}[theorem]{Remark}
\newtheorem*{rem}{Remark}
\newtheorem*{rems}{Remarks}
\theoremstyle{definition}
\newtheorem*{example*}{Example}
\begin{document}

\title{Amplification arguments for large sieve inequalities}
\author{E. Kowalski}

 \address{ETH Z\"urich -- D-MATH \\
  R\"{a}mistrasse 101\\ 8092 Z\"{u}rich, Switzerland}

\email{kowalski@math.ethz.ch}

\thanks{This work was completed while on sabbatical leave at the
  Institute for Advanced Study (Princeton, NJ). This material is based
  upon work supported by the National Science Foundation under
  agreement No. DMS-0635607. Any opinions, findinds and conclusions or
  recommendations expressed in this material are those of the
  author(s) and do not necessarily reflect the views of the National
  Science Foundation. }

\subjclass[2000]{Primary 11N35, 11F11}
\keywords{Large sieve inequality, modular form, amplification}

\begin{abstract}
We give a new proof of the arithmetic large sieve inequality
  based on an amplification argument, and use a similar method to
  prove a new sieve inequality for classical holomorphic cusp forms. A
  sample application of the latter is also given.
\end{abstract}

\maketitle

\section{The classical large sieve}

The classical arithmetic large sieve inequality states that, for any
real numbers $N$, $Q\geq 1$, any choice of subsets $\Omega_p\subset
\Zz/p\Zz$ for primes $p\leq Q$, we have
\begin{equation}\label{eq-als}
|\{n\leq N\,\mid\, n\mods{p}\notin\Omega_p\text{ for } p\leq Q\}|
\leq \frac{\Delta}{H}
\end{equation}
where
$$
H=\sumb_{q\leq Q}{\prod_{p\mid q}{\frac{|\Omega_p|}{p-|\Omega_p|}}},
$$
and $\Delta$ is any constant for which the ``harmonic'' large sieve
inequality holds: for any complex numbers $(a_n)$, we have
\begin{equation}\label{eq-hls}
\sum_{q\leq Q}{\sums_{a\mods{q}}{\Bigl|\sum_{n\leq N}{
a_ne\Bigl(\frac{an}{q}\Bigr)
}\Bigr|^2}}
\leq \Delta \sum_{n\leq N}{|a_n|^2},
\end{equation}
the notation $\sumb$ and $\sums$ denoting, respectively, a sum over
squarefree integers, and one over integers coprime with the (implicit)
modulus, which is $q$ here.
\par
By work of Montgomery-Vaughan and Selberg, it is known that one can
take 
$$
\Delta=Q^2-1+N
$$ 
(see, e.g.,~\cite[Th. 7.7]{ik}).
\par
There are a number of derivations of~(\ref{eq-als})
from~(\ref{eq-hls}); for one of the earliest,
see~\cite[Ch. 3]{montgomery-2}. The most commonly used is probably the
argument of Gallagher involving a ``submultiplicative'' property of
some arithmetic function (see, e.g.,~\cite[\S 2.2]{sieve} for a very
general version).
\par
We will show in this note how to prove~(\ref{eq-als}) quite
straightforwardly from the \emph{dual} version of the harmonic large
sieve inequality: $\Delta$ is also any constant for which
\begin{equation}\label{eq-dls}
\sum_{n\leq N}{\Bigl|
\sum_{q\leq Q}{\sums_{a\mods{q}}{
\beta(q,a)e\Bigl(\frac{an}{q}\Bigr)}}\Bigr|^2}
\leq \Delta \sum_{q\leq Q}{\sums_{a\mods{q}}{|\beta(q,a)|^2}},
\end{equation}
holds for arbitrary complex numbers $(\beta(q,a))$. This is of some
interest because, quite often,\footnote{\ But not always --
  Gallagher's very short proof, found e.g. in~\cite[Th. 1,
  p. 549]{montgomery}, proceeds directly.} the
inequality~(\ref{eq-hls}) is proved by duality from~(\ref{eq-dls}),
and because, in recent generalized versions of the large sieve
(see~\cite{sieve}), it often seems that the analogue of~(\ref{eq-dls})
is the most natural inequality to prove -- or least, the most easily
accessible. So, in some sense, one could dispense entirely
with~(\ref{eq-hls}) for many applications! In particular, note that
both known proofs of the optimal version with $\Delta=N-1+Q^2$ proceed
by duality.
\par
Note that some ingredients of many previous proofs occur in this new
argument. Also, there are other proofs of~(\ref{eq-als}) working
directly from the inequality~(\ref{eq-dls}) which can be found in the
older literature on the large sieve, usually with explicit connections
with the Selberg sieve (see the references to papers of Huxley,
Kobayashi, Matthews and Motohashi in~\cite[p.  561]{montgomery}),
although none of those that the author has seen seems to give an
argument which is exactly identical or as well motivated. Also, traces
of this argument appear earlier in some situations involving modular
forms, e.g., in~\cite{duke-kowalski}. In Section~\ref{sec-2}, we will
use the same method to obtain a new type of sieve inequality for
modular forms; in that case, it doesn't seem possible to adapt easily
the classical proofs.
\par
Indeed, maybe the most interesting aspect of our proof is that it is
very easy to motivate. It flows very nicely from an attempt to improve
the earlier inequality
\begin{equation}\label{eq-rls}
|\{n\leq N\,\mid\, n\mods{p}\notin\Omega_p\text{ for } p\leq Q\}|
\leq \frac{\Delta}{K},\quad\quad
K=\sum_{p\leq Q}{\frac{|\Omega_p|}{p}},
\end{equation}
of R\'enyi, which is most easily proved using~(\ref{eq-dls}) instead
of~(\ref{eq-als}), as in~\cite[\S 2.4]{sieve}. 
\par
We will explain this quite leisurely; one could be much more concise
and direct (as in Section~\ref{sec-2}).
\par
Let 
$$
S=\{n\leq N\,\mid\, n\mods{p}\notin\Omega_p\text{ for } p\leq Q\},
$$
be the sifted set; we wish to estimate from above the cardinality of
this finite set.  From~(\ref{eq-dls}), the idea is to find an
``amplifier'' of those integers remaining in the sifted set, i.e., an
expression of the form
$$
A(n)=\sum_{q\leq Q}{\sums_{a\mods{q}}{
\beta(q,a)e\Bigl(\frac{an}{q}\Bigr)}}
$$
which is \emph{large} (in some sense) when $n\in S$. Then an estimate
for $|S|$ follows from the usual Chebychev-type manoeuvre.
\par
To construct the amplifier $A(n)$, we look first at a single prime
$p\leq Q$. If $n\in S$, we have $n\mods{p}\notin\Omega_p$. If we
expand the characteristic function of $\Omega_p$ in terms of additive
characters,\footnote{\ We use this specific basis to
  use~(\ref{eq-dls}), but any orthonormal basis containing the
  constant function $1$ would do the job, as in~\cite{sieve}.} we
have then
$$
0=\charfun_{\Omega_p}(n)=\sum_{a\mods{p}}{\alpha(p,a)
e\Bigl(\frac{an}{p}\Bigr)},\quad\quad\quad
\alpha(p,a)=\frac{1}{p}\sum_{x\in\Zz/p\Zz}
{\charfun_{\Omega_p}(x)e\Bigl(\frac{ax}{p}\Bigr)},
$$
and the point is that the contribution of the constant function
($0$-th harmonic) is, indeed, relatively ``large'', because it is
$$
\alpha(p,0)=\frac{|\Omega_p|}{p},
$$
and exactly reflects the probability of a random element being in
$\Omega_p$. Thus for $n\mods{p}\notin \Omega_p$, we have
\begin{equation}\label{eq-one-p}
\sums_{a\mods{p}}{\beta(p,a)e\Bigl(\frac{an}{p}\Bigr)}=c_p
\end{equation}
with 
$$
c_p=\frac{|\Omega_p|}{p},\quad\quad
\beta(p,a)=-\alpha(p,a).
$$
\par
If we only use the contribution of the primes in~(\ref{eq-dls}), and
the amplifier
$$
A(n)=\sum_{p\leq  Q}
{\sum_{a\mods{p}}{\beta(p,a)e\Bigl(\frac{an}{p}\Bigr)}},
$$
then by~(\ref{eq-dls}), we get
$$
\sum_{n\in S}{|A(n)|^2}\leq \sum_{n\leq N}{|A(n)|^2}\leq \Delta
\sum_{p\leq Q}{\sum_{a\mods{p}} {|\beta(p,a)|^2}}.
$$
\par
For $n\in S$, the size of the amplifier is
$$
|A(n)|^2=\Bigl|\sum_{p\leq  Q}
{\sum_{a\mods{p}}{\beta(p,a)e\Bigl(\frac{an}{p}\Bigr)}}\Bigr|^2
=\Bigl|\sum_{p\leq Q}{c_p}\Bigr|^2=K^2,
$$
by~(\ref{eq-one-p}), while on the other hand, by applying the Parseval
identity in $\Zz/p\Zz$, we get
\begin{align*}
\sum_{p\leq Q}{\sum_{a\mods{p}}
{|\beta(p,a)|^2}}&=
\sum_{p\leq Q}{\Bigl(\frac{1}{p}\sum_{x\in\Zz/p\Zz}{
|\charfun_{\Omega_p}(x)|^2}-\alpha(p,0)^2\Bigr)}\\
&=\sum_{p\leq Q}{c_p(1-c_p)}\leq K.
\end{align*}
\par
So we obtain 
$$
K^2|S|\leq \Delta K,
$$
i.e., exactly R\'enyi's inequality~(\ref{eq-rls}), by this technique.
\par
To go further, we must exploit all the squarefree integers $q\leq Q$
(and not only the primes) to construct the amplifier. This is most
easily described using the Chinese Remainder Theorem to write
$$
\Zz/q\Zz\simeq \prod_{p\mid q}{\Zz/p\Zz},\quad\quad
(\Zz/q\Zz)^{\times}\simeq \prod_{p\mid q}{(\Zz/p\Zz)^{\times}},
$$
and putting together the amplifiers modulo primes $p\mid q$: if $n\in
S$ then $n\mods{p}\notin \Omega_p$ for all $p\mid q$, and hence
multiplying out~(\ref{eq-one-p}) over $p\mid q$, we find constants
$\beta(q,a)\in \Cc$, defined for $(a,q)=1$ (because $\beta(p,a)$ is
defined for $a$ coprime with $p$), such that
$$
\sums_{a\mods{q}}{\beta(q,a)e\Bigl(\frac{an}{q}\Bigr)}=
\prod_{p\mid q}{c_p}.
$$
\par
Moreover, because the product decomposition of the Chinese Remainder
Theorem is compatible with the Hilbert space structure involved, we
have
$$
\sums_{a\mods q}{|\beta(q,a)|^2}=\prod_{p\mod q}{
\sums_{a\mods p}{|\beta(p,a)|^2}
}=\prod_{p\mid q}{c_p(1-c_p)}.
$$
\par
Arguing as before, we obtain from~(\ref{eq-dls}) -- using all
squarefree moduli $q\leq Q$ this time -- that
\begin{equation}\label{eq-weaker}
|S|\leq \Delta \frac{A}{B^2},
\end{equation}
with
$$
A=\sumb_{q\leq Q}{\prod_{p\mid q}{c_p(1-c_p)}},\quad\quad
B=\sumb_{q\leq Q}{\prod_{p\mid q}{c_p}}.
$$
\par
This is not quite~(\ref{eq-als}), but we have some flexibility to
choose another amplifier, namely, notice that this expression is not
homogeneous if we multiply the coefficients $\beta(q,a)$ by scalars
independent of $a$, and we can use this to find a better
inequality. Precisely, let
$$
\gamma(q,a)=\Bigl(\prod_{p\mid q}{\xi_p}\Bigr)\beta(q,a),
$$
where $\xi_p$ are arbitrary real coefficients.
\par
Then we have the new amplification property
$$
\sums_{a\mods{q}}{\gamma(q,a)e\Bigl(\frac{an}{q}\Bigr)}=
\prod_{p\mid q}{\xi_pc_p}
$$
with altered ``cost'' given by
$$
\sums_{a\mods q}{|\gamma(q,a)|^2}=
\prod_{p\mid q}{\xi_p^2c_p(1-c_p)},
$$
so that, arguing as before, we get
$$
|S|\leq \Delta \frac{A_1}{B_1^2}
$$
with
$$
A_1=\sumb_{q\leq Q}{\prod_{p\mid q}{\xi_p^2c_p(1-c_p)}},\quad\quad
B_1=\sumb_{q\leq Q}{\prod_{p\mid q}{\xi_pc_p}}.
$$
\par
By homogeneity, the problem is now to minimize a quadratic form
(namely $A_1$) under a linear constraint given by $B_1$. This is
classical, and is done by Cauchy's inequality: writing
$$
c_q=\prod_{p\mid q}{c_p},\quad\quad \tilde{c}_q=\prod_{p\mid
  q}{(1-c_p)},\quad\quad \xi_q=\prod_{p\mid q}{\xi_p}
$$
for ease of notation, we have
$$
B_1^2=\Bigl(\sumb_{q\leq Q}{\xi_qc_q}\Bigr)^2
\leq \Bigl(\sumb_{q\leq Q}{\xi_q^2c_q\tilde{c}_q}\Bigr)
\Bigl(
\sumb_{q\leq Q}{\frac{c_q}{\tilde{c}_q}}
\Bigr)=A_1H,
$$
with equality if and only if $\xi_p$ is proportional to
$$
\xi_p=\frac{1}{1-c_p}=\frac{p}{p-|\Omega_p|},
$$
in which case 
$$
c_p\xi_p=\xi_p^2c_p(1-c_p)=\frac{|\Omega_p|}{p-|\Omega_p|},
$$
and we get $A_1=B_1=H$, hence $|S|\leq \Delta H^{-1}$, which
is~(\ref{eq-als}). 
\par
\medskip
\par
\begin{rems}
  (1) The last optimization step is reminiscent of the Selberg sieve
  (see, e.g.,~\cite[p. 161, 162]{ik}). Indeed, it is well known that
  the Selberg sieve is related to the large sieve, and particularly
  with the dual inequality~(\ref{eq-dls}), as explained
  in~\cite[p. 125]{halberstam-richert}. Note however that the
  coefficients we optimize for, being of an ``amplificatory'' nature,
  and different from the coefficents $\lambda_d$ typically sought for
  in Selberg's sieve, which are akin to the Möbius function and of a
  ``mollificatory'' nature.
\par
(2) The argument does not use any particular feature of the classical
sieve, and thus extends immediately to provide a proof of the general
large sieve inequality of~\cite[Prop. 2.3]{sieve} which is directly
based on the dual inequality~\cite[Lemma 2.8]{sieve}; readers
interested in the formalism of~\cite{sieve} are encouraged to check
this.
\end{rems}

\begin{example*}
  What are the amplifiers above in some simple situations? In the case
  -- maybe the most important -- where we try to count primes, we then
  take $\Omega_p=\{0\}$ to detect integers free of small primes by
  sieving, and~(\ref{eq-one-p}) becomes
$$
\sums_{a\mods{p}}{\Bigl(-\frac{1}{p}\Bigr)\cdot
  e\Bigl(\frac{an}{p}\Bigr)}=\frac{1}{p},
$$
if $p\nmid n$. Then, for $q$ squarefree, the associated detector is
the identity
$$
\sums_{a\mods{q}}{\frac{\mu(q)}{q}
  e\Bigl(\frac{an}{q}\Bigr)}=\frac{1}{q},
$$
if $(n,q)=1$, or in other words, it amounts to the well-known formula
$$
\sums_{a\mods {q}}{e\Bigl(\frac{an}{q}\Bigr)}=\mu(q)
$$
for the values of a Ramanujan sum with coprime arguments. Note that in
this case, the optimization process above replaced
$c_p=\frac{1}{p}$ with
$$
\xi_pc_p=\frac{1}{p-1},
$$
which is not a very big change -- and indeed, for small sieves, the
bound~(\ref{eq-weaker}) is not far from~(\ref{eq-als}), and remains of
the right order of magnitude. 
\par
On the other hand, for an example in a large sieve situation, we can
take $\Omega_p$ to be the set of squares in $\Zz/p\Zz$. The
characteristic function (for odd $p$) is
$$
\charfun_{\Omega_p}(x)=\sum_{a\mods{p}}{
\tau(p,a)e\Bigl(\frac{ax}{p}\Bigr)}
$$
with coefficients given -- essentially -- by Gauss sums
$$
\tau(p,a)=\frac{1}{p}\Bigl(1+\frac{1}{2}\sums_{x\mods{p}}{
e\Bigl(\frac{ax^2}{p}\Bigr)
}\Bigr).
$$
\par
Then $c_p$ tends to $1/2$ as $p\ra +\infty$, while $\xi_pc_p$
tends to $1$. This difference leads to a discrepancy in the order of
magnitude of the final estimate: using standard results on bounds for
sums of multiplicative functions,~(\ref{eq-weaker}) and taking
$Q=\sqrt{N}$, we get
$$
|S|\ll \sqrt{N}(\log N)^{1/4},
$$
instead of $|S|\ll\sqrt{N}$ that follows from~(\ref{eq-als}).
\end{example*}

\section{Sieving for modular forms}\label{sec-2}

To illustrate the possible usefulness of the proof given in the first
section, we use the same technique to prove a new type of large sieve
inequality for classical (holomorphic) modular forms. The originality
consists in using known inequalities for Fourier coefficients (due to
Deshouillers-Iwaniec) as a tool to obtain a sieve where the cusp forms
are the objects of interest, i.e., to bound from above the number of
cusp forms of a certain type satisfying certain local conditions.
\par
\par
Let $k\geq 2$ be a fixed even integer. For any integer $q\geq 1$, let
$S_k(q)^*$ be the finite set of primitive holomorphic modular forms of
level $q$ and weight $k$, with trivial nebentypus (more general
settings can be studied, but we restrict to this one for
simplicity). We denote by
$$
f(z)=\sum_{n\geq 1}{n^{(k-1)/2}\lambda_f(n)e(nz)}
$$
the Fourier expansion of a form $f\in S_k(q)^*$ at the cusp at
infinity.
\par
We consider on this finite set the ``measure'' $\mu=\mu_q$ defined by
$$
\mu_q(\{f\})=\frac{(k-1)!}{(4\pi)^{k-1}\langle f,f\rangle},
$$
where $\langle \cdot,\cdot\rangle$ is the Petersson inner
product. This is the familiar ``harmonic weight'', and we denote 
\begin{equation}\label{eq-weight}
\expect_{q}(\alpha)=\sum_{f\in S_k(q)^*}{\mu_q(\{f\})\alpha(f)},\quad
\proba_{q}(\mathcal{P}\text{ is true})=\sum_{\stacksum{f\in
    S_k(q)^*}{\mathcal{P}(f)\text{ is true}}}{\mu_q(\{f\})}
\end{equation}
the corresponding averaging operator and ``probability'', for an
arbitrary property $\mathcal{P}(f)$ referring to the modular forms
$f\in S_k(q)^*$. (Note that it is only asymptotically that this is a
probability measure, as $q\ra +\infty$).
\par
Imitating the notation in~\cite[Ch. 1]{sieve}, we now denote by 
$$
\rho_p\,:\, \begin{cases}
S_k(q)^*\ra \Rr\\
f\mapsto \lambda_f(p),
\end{cases}
$$
the $p$-th Fourier coefficient maps, which we see as giving
``global-to-local'' data, similar to reduction maps modulo primes for
integers. If $d\geq 1$ is a squarefree integer coprime with $q$, we
denote
$$
\rho_d\,:\, \begin{cases}
S_k(q)^*\ra \Rr^{\omega(d)}\\
f\mapsto (\rho_p(f))_{p\mid d}=(\lambda_f(p))_{p\mid d},
\end{cases}
$$
which we emphasize is a \emph{tuple} of Fourier coefficients, that
should not be mistaken with the single number $\lambda_f(d)$.
\par
The basic relation with sieve is the following idea: provided $Q$ is
small enough, the $(\rho_p(f))_{p\leq Q}$ become equidistributed as
$q\ra +\infty$ for the product Sato-Tate measure
$$
\nu_Q=\bigotimes_{p\leq Q}{\mu_{ST}},
$$
where
$$
\mu_{ST}=\frac{1}{\pi}\charfun_{[-2,2]}(t)\sqrt{1-\frac{t^2}{4}}\ dt,
$$
and this is similar to the equidistribution of arithmetic sequences
like the integers or the primes modulo squarefree $d$, and the
independence due to the Chinese Remainder Theorem.
\par
The quantitative meaning of this principle is easy to describe if $Q$
is bounded (independently of $q$), but requires some care when it
grows with $q$. For our purpose, we express it as given by uniform
bounds for Weyl-type sums associated with a suitable orthonormal basis
of $L^2(\nu_{Q})$. The latter is easy to construct.  Indeed, recall
first the standard fact that the Chebychev polynomials $X_m$, $m\geq
0$, defined by
\begin{equation}\label{eq-cheby-pol}
X_m(2\cos\theta)=\frac{\sin((m+1)\theta)}{\sin\theta},
\quad\quad \theta\in [0,\pi],
\end{equation}
form an orthonormal basis of $L^2(\mu_{ST})$. Then standard arguments
show that for $Q\geq 2$ and $\nu_Q$ the measure above on
$[-2,2]^{\pi(Q)}$, the functions 
$$
\Lambda_d(x)=\prod_{p}{X_{m_p}(x_p)},\quad\text{ for all }
x=(x_p)_p\in [-2,2]^{\pi(Q)},
$$
defined for any $Q$-friable integer\footnote{\ I.e., integer only
  divisible by primes $\leq Q$.} $d\geq 1$, factored as
$$
d=\prod_{p\leq Q}{p^{m_p}},
$$
form an orthonormal basis of $L^2(\nu_Q)$. (In particular we have
$\Lambda_1=1$, the constant function $1$.)
\par
We have also the following fact which gives the link between this
orthonormal basis and our local data $(\rho_p)_p$: for any integer
$m\geq 1$ coprime with $q$ and divisible only by primes $p\leq Q$, and
any $f\in S_k(q)^*$, we have
\begin{equation}\label{eq-fact}
\Lambda_m(\rho_d(f))=\lambda_f(m),\quad\text{ where }\quad
d=\prod_{p\mid m}{p}.
\end{equation}
\par
This is simply a reformulation of the Hecke multiplicativity relations
between Fourier coefficients of primitive forms.

\begin{rem}
  Our situation is similar to that of classical sieve problems, where
  (in the framework of~\cite{sieve}) we have a set $X$ (with a finite
  measure $\mu$) and surjective maps $X\fleche{\rho_{\ell}} Y_{\ell}$
  with \emph{finite} target sets $Y_{\ell}$, each equipped with a
  probability density $\nu_{\ell}$, so that the equidistribution can
  be measured by the size of the remainders $r_{\ell}(y)$ defined by
$$
\mu(\rho_{\ell}^{-1}(y))=\mu(X)\nu_{\ell}(y)+r_{\ell}(y)
$$
and the independence by using finite sets
$m=\{\ell_1,\ldots,\ell_k\}$, and
$$
Y_m=\prod_{\ell\in m}{Y_{\ell}},\quad\quad
\rho_m=\prod_{\ell\in m}{\rho_{\ell}}\,:\, X\ra Y_m,\quad\quad
\nu_{m}(y_1,\ldots, y_k)=\nu_{\ell_1}(y_1)\cdots\nu_{\ell_k}(y_k),
$$
and looking at
$$
\mu(\rho_{m}^{-1}(y))=\mu(X)\nu_{m}(y)+r_{m}(y).
$$
\par
Here the compact set $[-2,2]$ requires the use of infinitely many
functions to describe an orthonormal basis. Another (less striking)
difference is that our local information lies in the same set $[-2,2]$
for \emph{all} primes, whereas classical sieves typically involve
reduction modulo primes, which lie in different sets.
\end{rem}

We now state the analogue, in this language, of the dual large sieve
inequality~(\ref{eq-dls}).





\begin{proposition}\label{pr-equi-2}
  With notation as above, for all $Q\geq 1$, all integers $N\geq 1$,
  all complex numbers $\alpha(m)$ defined for $m$ in the set
  $\friable_q(N,Q)$ of $Q$-friable integers $\leq N$ coprime with $q$,
  we have
\begin{equation}\label{eq-ls}
\expect_q\Bigl(\Bigl| \sum_{m\in \friable_q(N,Q)}
{\alpha(m)\Lambda_m(\rho_{d}(f))} \Bigr|^2\Bigr)\ll
(1+Nq^{-1})\sum_{m}{|\alpha(m)|^2},
\end{equation}
where the implied constant depends only on $k$ and $d$ on the
left-hand side is the radical $\prod_{p\mid m}{p}$.
\end{proposition}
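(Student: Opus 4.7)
The plan is to reduce (\ref{eq-ls}) to the classical large sieve inequality of Deshouillers-Iwaniec for Fourier coefficients of holomorphic cusp forms, using (\ref{eq-fact}) as the bridge. First I would apply (\ref{eq-fact}) termwise: every $m\in\friable_q(N,Q)$ is coprime to $q$ and divisible only by primes $p\leq Q$, so $\Lambda_m(\rho_d(f))=\lambda_f(m)$ for $d=\prod_{p\mid m}p$. Unwinding the definition of $\expect_q$ from (\ref{eq-weight}), the left-hand side of (\ref{eq-ls}) becomes
$$\frac{(k-1)!}{(4\pi)^{k-1}}\sum_{f\in S_k(q)^*}\frac{1}{\langle f,f\rangle}\Bigl|\sum_{m}\alpha(m)\lambda_f(m)\Bigr|^2,$$
i.e.\ a harmonic-weighted $L^2$ norm of a linear form in Hecke eigenvalues.

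Next I would pass to a complete Hecke-eigenform basis of $S_k(q)$ --- permissible because every summand is nonnegative --- and expand the square. Renormalizing via the Fourier coefficients $a_f(m)=m^{(k-1)/2}\lambda_f(m)$ and applying the Petersson trace formula, the diagonal contribution is $O_k(1)\sum_m|\alpha(m)|^2$, while the off-diagonal is a Kloosterman-Bessel sum of the form
$$\sum_{m\ne n}\frac{\alpha(m)\overline{\alpha(n)}}{(mn)^{(k-1)/2}}\sum_{c\geq 1}\frac{S(m,n;cq)}{cq}\,J_{k-1}\!\Bigl(\frac{4\pi\sqrt{mn}}{cq}\Bigr),$$
multiplied by an explicit constant depending on $k$.

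The heart of the proof is to bound this off-diagonal contribution by $O_k(Nq^{-1})\sum_m|\alpha(m)|^2$, which is precisely the Deshouillers-Iwaniec large sieve for Fourier coefficients; it combines Weil's bound for Kloosterman sums with the uniform estimate $J_{k-1}(x)\ll_k\min(x^{k-1},x^{-1/2})$ and careful dyadic bookkeeping, and may be invoked as a black box. The mild subtlety --- the passage from the sum over primitive forms to one over a full Hecke basis --- is harmless by positivity of the harmonic weights, and the coprimality $(m,q)=1$ built into $\friable_q(N,Q)$ ensures (\ref{eq-fact}) applies without further restriction. Thus, once (\ref{eq-fact}) has been invoked, the proof is essentially a single citation.
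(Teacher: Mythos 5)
Your proposal follows essentially the same route as the paper: apply~(\ref{eq-fact}) to turn the $\Lambda_m(\rho_d(f))$ into Hecke eigenvalues $\lambda_f(m)$, unwind the harmonic weight, enlarge by positivity to a full orthonormal basis of $S_k(q)$, and then cite the Deshouillers--Iwaniec large sieve (\cite[Th.~7.26]{ik}). The additional detail you give about what is inside that black box (Petersson trace formula, diagonal versus Kloosterman--Bessel off-diagonal, Weil plus Bessel-function estimates) is accurate but not needed once the inequality is invoked as a citation, which is exactly what the paper does; the only minor point you do not flag is the footnoted caveat that $k=2$ costs an extra factor of $\log N$.
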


\begin{proof}[Proof of Proposition~\ref{pr-equi-2}]
  This is in fact simply a consequence of one of the well-known large
  sieve inequalities for Fourier coefficients of cusp forms (as
  developped by Iwaniec and by Deshouillers--Iwaniec,
  see~\cite{desh-iwaniec}). The point is that because
  of~(\ref{eq-fact}), the left-hand side of~(\ref{eq-ls}) can be
  rewritten
\begin{align*}
S&=\sum_{f\in S_k(q)^*}{\mu_q(\{f\})
\Bigl| \sum_{m\in \friable_q(N,Q)}{
  \alpha(m)\lambda_f(m)} \Bigr|^2}
\\
&=\frac{(k-1)!}{(4\pi)^{k-1}}
\sum_{f\in S_k(q)^*}{
\Bigl| \sum_{m\in \friable_q(N,Q)}{
  \alpha(m)\frac{\lambda_f(m)}{\|f\|}} \Bigr|^2}
\end{align*}
\par
We can now enlarge this by positivity; remarking that 
$$
\{\frac{f}{\|f\|}\,\mid\, f\in S_k(q)^*\}
$$
can be seen as a subset of an orthonormal basis of the space $S_k(q)$
of cusp forms of weight $k$ and level $q$, and selecting any such
basis $\mathcal{B}_{k,q}\supset S_k(q)^*$, we have therefore
$$
S\leq \frac{(k-1)!}{(4\pi)^{k-1}}
\sum_{\varphi\in \mathcal{B}_{k,q}}{
\Bigl|
\sum_{m\leq N}{
\alpha(m)\lambda_{\varphi}(m)
}
\Bigr|^2
},
$$
where we put $\alpha(m)=0$ if $m\notin \friable_q(N,Q)$, and where the
$\lambda_{\varphi}(m)$ are the Fourier coefficients, so that
$$
\varphi(z)=\sum_{m\geq 1}{m^{(k-1)/2}\lambda_{\varphi}(m)e(nz)},
$$
(as earlier for Hecke forms). Now by the large sieve inequality
in~\cite[Theorem 7.26]{ik}, taking into account the slightly different
normalization,\footnote{\ The case $k=2$ requires adding a factor
  $\log N$.} we have
\begin{equation}\label{eq-d-i}
\frac{(k-1)!}{(4\pi)^{k-1}}
\sum_{\varphi\in \mathcal{B}_{k,q}}{
\Bigl|
\sum_{1\leq m\leq N}{
\alpha(m)\lambda_{\varphi}(m)
}
\Bigr|^2
}\ll (1+Nq^{-1})\sum_{m}{|\alpha(m)|^2}
\end{equation}
with an absolute implied constant, and this leads to~(\ref{eq-ls}). 
\end{proof}

\begin{remark}
  In terms of equidistribution (which are hidden in this proof), the
  basic statement for an individual prime $p$ is that
$$
\lim_{q\ra +\infty}{\expect_q(X_m(\rho_p(f)))}=0,
$$
for all $m\geq 1$. Such results are quite well-known and follow in
this case from the Petersson formula. There is an implicit version
already present in Bruggeman's work (see \cite[\S 4]{bruggeman}, where
it is shown that, on average, ``most'' Maass forms with Laplace
eigenvalue $\leq T$, satisfy the Ramanujan-Petersson conjecture), and
the first explicit result goes back to Sarnak~\cite{sarnak}, still in
the case of Maass forms.\footnote{\ This is the only result we know
  that discusses the issue of independence of the coefficients at
  various primes.}  Serre~\cite{serre} and Conrey, Duke and
Farmer~\cite{cdf} gave similar statements for holomorphic forms, and
Royer~\cite{royer} described quantitative versions in that case.
\end{remark}

We can now derive the analogues of the arithmetic
inequality~(\ref{eq-als}) and of R\'enyi's
inequality~(\ref{eq-rls}). The basic ``sieve'' questions we look at is
to bound from above the cardinality (or rather, $\mu_q$-measure) of
sets of the type
$$
S=\{f\in S_k(q)^*\,\mid\, \lambda_f(p)=\rho_p(f)\notin \Omega_p\text{
  for } p\leq Q,\ p\nmid q\},
$$
for $\Omega_p\subset [-2,2]$. Because the expansion of the
characteristic function of $\Omega_p$ in terms of Chebychev
polynomials involves infinitely many terms, we restrict to a simple
type of condition sets $\Omega_p$ of the following type:
\begin{equation}\label{eq-spec-omega}
\Omega_p=\{x\in [-2,2]\,\mid\, Y_p(x)\leq \beta_{p,0}-\delta_p\},
\end{equation}
where
$$
Y_p=\beta_{p,0}+\beta_{p,1} X_1+\cdots +\beta_{p,s} X_s
$$
is a real-valued polynomial and $\delta_p>0$ (the degree $s$ is
assumed to be the same for all $p$). Note that $\beta_{p,0}$ is the
$\mu_{ST}$-average of $Y_p$, so our sets $S$ are those where the
Fourier coefficients for $p\leq Q$ are ``away'' from the putative
average value according to the Sato-Tate measure.
\par
Denote also by
$$
\sigma_{p}^2= \sum_{1\leq i\leq
  s}{\beta_{p,i}^2}=\int_{-2}^2{Y_p^2d\mu_{ST}}
-\Bigl(\int_{-2}^2{Y_pd\mu_{ST}}\Bigr)^2,
$$
the variance of $Y_p$.
\par
Then the analogue of~(\ref{eq-rls}) is
\begin{equation}\label{eq-cor-1}
  \expect_q\Bigl(\Bigl(\sum_{p\leq
    Q}{(Y_p(\lambda_f(p))-\beta_{p,0})}\Bigr)^2\Bigr)
  \ll (1+Q^sq^{-1})\sum_{p\leq Q}{\sigma_p^2},
\end{equation}
where the implied constant depends only on $k$, and that
of~(\ref{eq-als}) is
\begin{equation}\label{eq-cor-2}
\proba_q\Bigl(Y_p(\lambda_f(p))\leq \beta_{p,0}-\delta_p\text{ for all }
p\leq Q\Bigr)
\ll (1+N^sq^{-1})H^{-1}
\end{equation}
where $\delta_p>0$, $N\geq 1$ is arbitrary and
$$
H=\sum_{m\in \friable_q(N,Q)}{
\prod_{p\mid m}{
\frac{\delta_p^2}
{\sigma_{p}^2}}},
$$
the implied constant depending again only on $k$.

To prove~(\ref{eq-cor-1}), we apply~(\ref{eq-ls}) with $N=Q^s$ and
$\alpha(m)=0$ unless $m=p^j$ with $1\leq j\leq s$ and $p\leq Q$,
$p\nmid q$, in which case
$$
\alpha(p^j)=\beta_{p,j}.
$$
\par
By definition of $Y_p(x)$ and of $\Lambda_d$, we get
$$
\sum_{m\in \friable_q(N,Q)}{ \alpha(m)\Lambda_m(\rho_{d}(f))}
=\sum_{p\leq Q}{(Y_p(\rho_p(f))-\beta_{p,0})},
$$
showing that~(\ref{eq-cor-1}) is indeed a special case
of~(\ref{eq-ls}).
\par
To prove~(\ref{eq-cor-2}), we use the ``amplification'' method of the
previous section. The basic observation is that if, for some prime
$p$, we have
\begin{equation}\label{eq-gap}
Y_p(\lambda_f(p))\leq \beta_{0,p}-\delta_p,
\end{equation}
then it follows that
$$
\sum_{1\leq i\leq s}{(-\beta_{p,i})X_i(\lambda_f(p))}\geq
\delta>0.
$$
\par
Now let $\xi_p$, for $p\leq Q$, be arbitrary auxiliary positive real
numbers, and let 
$$
\xi_d=\prod_{p\mid d}{\xi_p}
$$
for $d\mid P(Q)$, the product of all primes $p\leq Q$.
If~(\ref{eq-gap}) holds for all $p\leq Q$ coprime with $q$, then we
find by multiplying out that, for any integer $m\in \friable_q(N,Q)$,
i.e., such that
\begin{equation}\label{eq-cond-d}
  d\leq N,\quad  d\mid P(Q),\quad (d,q)=1,
  \quad d=p_1\cdots p_k,\text{ (say)},
\end{equation}
and for such $(\xi_p)$, we have
$$
\xi_d
\multsum_{1\leq j_1,\ldots,j_k\leq s}{ (-1)^k\beta_{p_1,j_1}\cdots
  \beta_{p_k,j_k} X_{j_1}(\lambda_f(p_1^{j_1}))
\cdots X_{j_k}(\lambda_f(p_k^{j_k}))} \geq
\xi_d\delta^{\omega(d)},
$$
which translates to
$$
\xi_d
\sum_{m\in S_d}{\alpha(m)\Lambda_m((\lambda_f(p))_{p\leq Q})} 
\geq
\xi_d\prod_{p\mid d}{\delta_p},
$$
where $m$ runs over the set $S_d$ of integers of the type
$$
m=\prod_{p\mid d}{p^{v_p(n)}},\quad\text{ with }\quad
1\leq v_p(n)\leq s,\quad \prod_{p\mid m}{p}\leq N,
$$
so $m\leq N^s$, and
$$
\alpha(m)=\prod_{p\mid d}{(-\beta_{p,v_p(m)})}.
$$
\par
Thus, summing over $d$ subject to~(\ref{eq-cond-d}), squaring, then
averaging over $f$ and applying~(\ref{eq-ls}), we find that the
probability
$$
\mathcal{P}=\proba_q\Bigl(Y_p(\lambda_f(p))\leq 
\beta_{p,0}-\delta_p\text{ for  all } p\leq Q\Bigr)
$$
satisfies
$$
\mathcal{P}\ll (1+N^sq^{-1})\frac{A_1}{B_1^2},
$$
where
$$
A_1=\sum_{d}{\xi_d^2
\sum_{m\in S_d}{|\alpha(m)|^2}}
=\sum_{d}{\xi_d^2\prod_{p\mid d}{\sigma_p^2}}
\quad
B_1=\sum_{d}{\xi_d\prod_{p\mid d}{\delta_p}}
$$
\par
Cauchy's inequality shows that $B_1^2\leq HA_1$, with equality if
$$
\xi_p=\frac{\delta_p}{\sigma_p^2},\text{ for all } p\leq Q,
$$
and the inequality above, with this choice, leads to
$$
\mathcal{P}\ll (1+N^sq^{-1})H^{-1},
$$
as desired.
\par
\begin{rem}
  If one tries to adapt, for instance, the standard proof
  in~\cite{sieve}, one encounters problems because the latter would
  (naively at least) involve the problematic expansion of a Dirac
  measure at a fixed $x\in [-2,2]$ in terms of Chebychev polynomials.
\end{rem}
\par
Here is an easy application of~(\ref{eq-cor-2}), for illustration
(stronger results for that particular problem follow from the
inequality of Lau and Wu~\cite{lau-wu}, as will be explained with
other related results in a forthcoming joint work): it is well-known
that for $f\in S_k(q)^*$, the sequence of real numbers
$(\lambda_f(p))_p$ changes sign infinitely often, and there has been
some recent interest (see, e.g., the paper~\cite{iks} of Iwaniec,
Kohnen and Sengupta) in giving quantitative bounds on the first sign
change. We try instead to show that this first sign-change is quite
small on average over $f$ (compare with~\cite{duke-kowalski}): fix
$A>0$, and let
$$
S_{q,A}=\{f\in S_k(q)^*\,\mid\, \lambda_f(p)\leq 0\text{ for all }
p\leq (\log q)^A\}
$$
(any other combination of signs is permissible). This is a ``sifted
set'', and we claim that
$$
|S_{q,A}|\ll q^{1/2+1/(2A)+\eps}
$$
for any $\eps>0$, where the implied constant depends only on $k$ and
$\eps$. Since $S_k(q)^*$ is of size about $q$ (for fixed $k$), this is
a non-trivial bound for all $A>1$.  Moreover, to prove this bound, it
suffices to show
$$
\proba_q(S_{q,A})\ll q^{-1/2+1/(2A)+\eps}
$$
since we have the well-known upper bound $\mu_q(\{f\})\gg q^{-1-\eps}$
for any $\eps>0$ (see, e.g.,~\cite[p. 138]{ik}). 
\par
The sets $\Omega_p=]0,2]$ used in $S_{q,A}$ are not exactly in the
form~(\ref{eq-spec-omega}), so we use some smoothing: we claim there
exists a real polynomial $Y$ of degree $s=2$ such that
\begin{equation}\label{eq-pol-sign}
  Y(x)\leq \mathrm{sgn}(x),\quad\text{ for all } x\in [-2,2],\quad\text{ and }
  \quad
  \beta_0=\int_{-2}^2{Yd\mu_{ST}}>-1.
\end{equation}
\par
Assuming such a polynomial is given, we observe that
$$
\lambda_f(p)\geq 0\Rightarrow Y(\lambda_f(p))\leq
-1=\beta_0-\delta,
$$
for some fixed $\delta>0$. Therefore, by~(\ref{eq-cor-2}) with
$N=q^{1/s}$, we get for all $Q$ that
$$
\proba_q(\lambda_f(p)\leq 0\text{ for } p\leq Q)
\ll H^{-1}
$$
where 
$$
H=\sum_{m\in \friable_q(q^{1/s},Q)}{\gamma^{\omega(m)}
},
\quad\quad
\text{ with }\quad\quad \gamma=\frac{(\beta_0+1)^2}
{\beta_1^2+\beta_2^2},
$$
and the implied constant depends only on $k$. By assumption, we have
$\gamma>0$, and an easy lower bound for $H$ follows in the range of
interest simply from bounding $\gamma^{\omega(m)}\gg m^{-\eps}$ and
using known results on the cardinality of $\friable_q(y,(\log q)^A)$:
we have
$$
\sum_{m\in \friable_q(q^{1/s},(\log q)^{A})}{\gamma^{\omega(m)}}
\gg q^{s^{-1}(1-A^{-1})-\eps},
$$
for any $\eps>0$, the implied constant depending only on $A$, the
choice of $Y$ and $\eps$. This clearly gives the result, and it only
remains to exhibit the polynomial $Y$.  One can check easily that
$$
Y(x)=-\frac{3}{4}X_0(x)+\frac{1}{2}X_1(x)+\frac{1}{4}X_2(x)
=-1+\frac{x}{2}+\frac{x^2}{4}
$$
does the job (see its graph); the numerical values of $\beta_0$,
$\delta$ and $\gamma$ are given by
$$
\beta_0=-\frac{3}{4},\quad\quad \delta=\frac{1}{4},
\quad\quad
\beta_1^2+\beta_2^2=\frac{5}{16},\quad\quad
\gamma=\frac{4}{5}.
$$
\par
\begin{figure}[ht]
\centering
\includegraphics[width=3.5in]{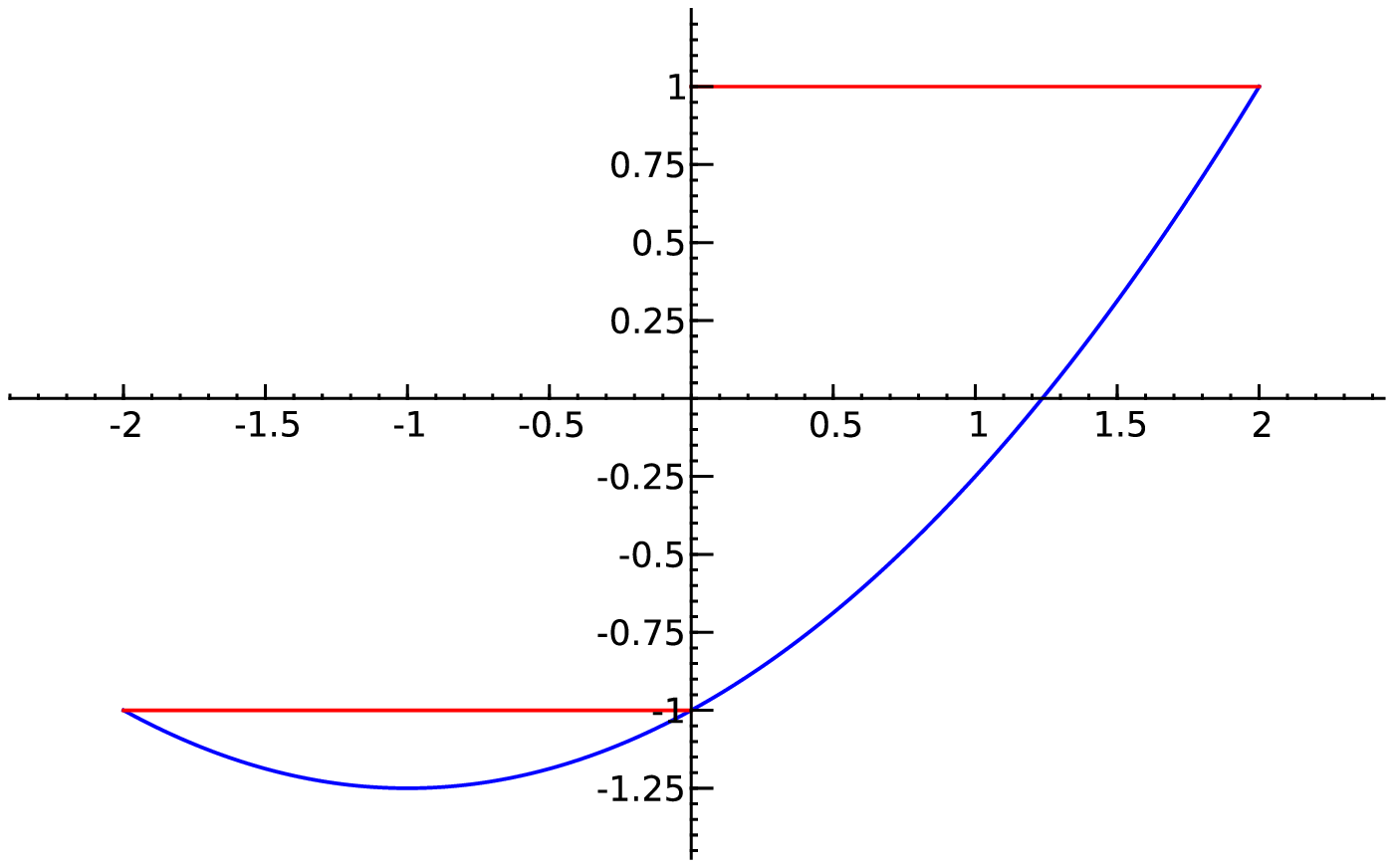}
\caption{}
\end{figure}

\begin{remark}
See the letter of Serre in the Appendix of~\cite{shahidi} for previous
examples showing how to use limited information towards the Sato-Tate
conjecture to prove distribution results for Hecke eigenvalues (of a
fixed modular form).
\end{remark}



\begin{thebibliography}{CC}

\bibitem{bruggeman}
R. Bruggeman: \textit{Fourier coefficients of cusp forms},
Invent. math.  45 (1978), 1--18.

\bibitem{cdf} B. Conrey, W. Duke and D. Farmer: \textit{The
    distribution of the eigenvalues of Hecke operators}, Acta
  Arith. 78 (1997), 405--409.

\bibitem{desh-iwaniec}
J-M. Deshouillers and H. Iwaniec: \textit{Kloosterman sums and Fourier
coefficients of cusp forms}, Invent. math. 70 (1982), 220--288.

\bibitem{duke-kowalski} W. Duke and E. Kowalski: \textit{A problem
    of Linnik for elliptic curves and mean-value estimates for
    automorphic representations}, with an Appendix by D. Ramakrishnan,
  Invent math. 139 (2000), 1--39.

\bibitem{halberstam-richert} H. Halberstam and H.E. Richert:
  \textit{Sieve methods}, London Math. Soc. Monograph, Academic Press
  (London), 1974

\bibitem{iks}
H. Iwaniec, W. Kohnen and J. Sengupta: \textit{The first negative
  Hecke eigenvalue}, International J. Number Theory 3 (2007),
355--363. 

\bibitem{ik} H. Iwaniec and E. Kowalski: \textit{Analytic number
    theory}, AMS Colloquium Publ. 53, 2004.

\bibitem{sieve} E. Kowalski: \textit{The large sieve and its
    applications: arithmetic geometry, random walks and discrete
    groups}, Cambridge Tracts in Math. 175, 2008.

\bibitem{lau-wu} Y.K. Lau and J. Wu: \textit{A large sieve
    inequality of Elliott-Montgomery-Vaughan type for automorphic
    forms and two applications}, International Mathematics Research
  Notices, Vol. 2008, \url{doi:10.1093/imrn/rmn162}.


\bibitem{montgomery-2} H-L. Montgomery: \textit{Topics in
    multiplicative number theory}, Lecture Notes Math. 227,
  Springer-Verlag 1971.

\bibitem{montgomery} H-L. Montgomery: \textit{The analytic
    principle of the large sieve}, Bull. A.M.S. 84 (1978), 547--567.

\bibitem{royer} E. Royer: \textit{Facteurs $\Qq$-simples de
    $J_0(N)$ de grande dimension et de grand rang},
  Bull. Soc. Math. France 128 (2000), 219--248.

\bibitem{sarnak} P. Sarnak: \textit{Statistical properties of
    eigenvalues of the Hecke operators}, in ``Analytic Number Theory
  and Diophantine Problems'' (Stillwater, OK, 1984), Progr. Math. 70,
  Birkh\"auser, 1987, 321--331.

\bibitem{serre}
J-P. Serre: \textit{R\'epartition asymptotique des valeurs propres de
  l'op\'erateur de Hecke $T_p$}, J. American Math. Soc. 10 (1997),
75--102. 


\bibitem{shahidi} F. Shahidi: \textit{Symmetric power $L$-functions
    for $GL(2)$}, in: ``Elliptic curves and related topics'', edited
  by E. Kishilevsky and M. Ram Murty, CRM Proc. and Lecture Notes 4,
  1994, 159--182.

\end{thebibliography}
\end{document}